\newtheorem{example}{Example}[section]							
\newtheorem{theorem}{Theorem}[section]
\newtheorem{proposition}{Proposition}[section]
\newtheorem{lemma}{Lemma}[section]
\newtheorem{claim}{Claim}[section]
\newtheorem{remark}{Remark}[section]
\newtheorem{thmm}{Theorem}
\def\T{\mathbb T}
\def\N{\mathbb N}
\def\R{\mathbb R}\def\Z{\mathbb Z}
\def\cC{\mathcal C}
\def\be{\begin{equation}}
\def\ee{\end{equation}}
\def\cX{\mathcal X}
\def\cF{\mathcal F}
\def\cZ{\mathcal Z}
\title{Global rigidity of smooth $\Z\ltimes_\lambda\R$-actions on $\T^2$}
\author{Changguang Dong \quad and \quad Yi Shi}
\date{\today}
\begin{document}
\maketitle
\begin{abstract}
For $\lambda>1$, we consider the locally free $\Z\ltimes_\lambda\R$ actions  on $\T^2$. We show that, if the action is $C^r$ with $r\geq2$, then it is $C^{r-\epsilon}$-conjugate to an affine action generated by a hyperbolic automorphism and a linear translation flow along expanding eigen-direction of the  automorphism. In contrast,  there exists a $C^{1+\alpha}$-action which is semi-conjugate, but not topologically conjugate to an affine action.
\end{abstract}


\section{Introduction}

In dynamical systems, rigidity phenomena have  been extensively studied during past decades. Especially, there are tremendously amount of advancements, with fascinated applications  not only in dynamical systems, but also   in other fields such as geometry, number theory etc. These include but are not limited to, orbit closure rigidity (e.g. \cite{Fu, Be,Ra}), measure rigidity (e.g. \cite{Ra,EL1,EKL,KKRH}), local rigidity (e.g. \cite{DK1,DK2, Fi}), global rigidity (e.g. \cite{SV,DSVX}) etc. 
This article is a contribution to the global rigidity program. Namely, we would like to understand, describe and classify all the actions of a specific group on a specific manifold. 

The group we are considering is a special type of so-called abelian-by-cyclic groups. It is given as follows. Let $\lambda>1$, and $G_\lambda:=\Z\ltimes_\lambda\R$ defined by the following group relation$$(m,t)\circ (n,s)=(m+n,\lambda^ms+t),\text{ for any }(m,t),(n,s)\in\Z\times\R.$$

We are interested in  the  action of $G_\lambda$ on $2$-torus $\T^2$ by smooth diffeomorphisms. 
Typical examples of such actions are affine actions. Let $A\in{\rm GL}(2,\Z)$ with an eigenvalue $\lambda>1$ and corresponding unit eigenvector $v$ with $Av=\lambda v$. Then for every constant $a>0$, it is easy to see that the automorphism $A$ together with the flow generated by $av$ (flow direction is $v$ with constant velocity $a$) on $\T^2$ generates the group $G_\lambda$, hence this  gives an affine solvable action of $G_{\lambda}$. 

One may wonder, whether there exist other smooth $G_\lambda$ actions on $\T^2$ up to smooth conjugacy.
To answer this question, we show that all $C^{2+}$-smooth actions of $G_{\lambda}$ on $\T^2$ is smoothly conjugate to an affine action.

\begin{thmm}\label{th:m-1}
Let $\lambda>1$ and $r\geq2$. Suppose  $\rho:\Z\ltimes_\lambda\R\to \operatorname{Diff}^r(\T^2)$ is a locally free action, then it is $C^{r-\epsilon}$-smoothly conjugate to an affine action, for any $\epsilon>0$. More precisely, there exist
\begin{itemize}
	\item a hyperbolic automorphism $A\in{\rm GL}(2,\Z)$ where $\lambda$ is the unstable eigenvalue of $A$;
	\item a flow $v_t$ generated by the unit unstable vector field of $A$;
\end{itemize}
such that $\rho$ is $C^{r-\epsilon}$-smoothly conjugate to the group action generated by $\{A,v_{at}\}$ for some $a\in\R\setminus\{0\}$.
In particular, if the action $\rho$ is $C^\infty$-smooth, then it is $C^\infty$-smoothly conjugate to an affine action.
\end{thmm}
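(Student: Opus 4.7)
The plan is to read the group relation as a renormalization identity for an expanding invariant line field of the $\Z$-generator $\rho(1,0)$, identify the orbit foliation of the $\R$-generator as the unstable foliation of the resulting Anosov structure, apply Franks--Manning, and then promote the topological conjugacy to $C^{r-\epsilon}$ by combining a renormalization argument that pins down the flow time change with Journé's regularity lemma.

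Write $f:=\rho(1,0)$ and $\phi_t:=\rho(0,t)$, and let $X$ be the $C^{r-1}$ infinitesimal generator of $\phi_t$, nowhere vanishing by local freeness. The group law forces $f\circ\phi_t=\phi_{\lambda t}\circ f$, and differentiating at $t=0$ gives $Df\cdot X=\lambda\,(X\circ f)$; hence the line bundle $E:=\R X$ is $Df$-invariant with $\|Df^n|_E\|\asymp\lambda^n$ uniformly on $\T^2$. A periodic orbit of $\phi_t$ of period $T$ would, via the iterated relation $\phi_s\circ f^n=f^n\circ\phi_{s/\lambda^n}$, produce periodic orbits of period $\lambda^n T$ at $f^n(p)$; sending $n\to-\infty$ yields arbitrarily short periods, forcing a zero of $X$, a contradiction. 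So $\phi_t$ is fixed-point-free and aperiodic, and by Denjoy's theorem applied to the first-return map on a cross-section it is topologically conjugate to an irrational linear flow and admits a non-zero rotation vector $v\in\R^2$. The induced automorphism $A:=f_*\in\mathrm{GL}(2,\Z)$ on $H_1(\T^2)$ satisfies $Av=\lambda v$; since $|\det A|=1$ and $\lambda>1$, $A$ is hyperbolic with eigenvalues $\lambda$ and $\pm\lambda^{-1}$.

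Next I argue $f$ is Anosov with unstable bundle $E$. The unique $\phi_t$-invariant probability measure $\mu$ is automatically $f$-invariant (since $f_*\mu$ is $\phi_t$-invariant too) and has Lyapunov exponent exactly $\log\lambda$ along $E$, via the telescoping identity $\int\log(\|X\circ f\|/\|X\|)\,d\mu=0$. Combined with Manning's lower bound $h_{\mathrm{top}}(f)\geq\log\lambda$, Ruelle's inequality, and a cone-field construction exploiting the uniform expansion of $E$ together with $f\simeq A$, one obtains a dominated splitting $T\T^2=E\oplus E^s$ with $E^s$ uniformly contracting. Franks--Manning then supplies a Hölder homeomorphism $h$ with $h\circ f=A\circ h$ which maps the unstable foliation of $f$, i.e.\ the orbits of $\phi_t$, onto the linear foliation tangent to $v$. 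Writing $h\phi_t h^{-1}$ as the flow of $\sigma(x)\,v$ for a continuous positive $\sigma$, the intertwining $A\circ(h\phi_t h^{-1})\circ A^{-1}=h\phi_{\lambda t}h^{-1}$ collapses to the pointwise identity $\sigma\circ A^{-1}=\sigma$; continuity of $\sigma$ together with ergodicity of $A$ forces $\sigma\equiv a$ for a unique $a\in\R\setminus\{0\}$, and therefore $h\circ\phi_t=v_{at}\circ h$. Thus $h$ conjugates the full $G_\lambda$-action to the affine action generated by $A$ and $v_{at}$.

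It remains to upgrade $h$ to $C^{r-\epsilon}$. Along unstable leaves, $h$ intertwines two smooth flows via the constant time change $t\mapsto at$, so is $C^r$ along $\phi_t$-orbits with uniform leafwise estimates. Along a stable leaf $W^s(p)$, $f$ restricts to a $C^r$ contraction that is conjugate through $h$ to the linear contraction by $\pm\lambda^{-1}$; the matching of periodic data reduces to the vanishing of the Birkhoff sums of $\log|\det Df|$ over $f$-periodic orbits, which follows from $\log|\det Df|$ being a coboundary, since its integral against every $f$-invariant measure vanishes by the entropy and exponent matching above. The de la Llave--Marco--Moriyon regularity theorem then promotes $h|_{W^s(p)}$ to $C^{r-\epsilon}$ uniformly in $p$, and since the stable and unstable foliations of the $C^r$ Anosov $f$ are transverse and of class $C^{r-\epsilon}$, Journé's lemma assembles these into $h\in\operatorname{Diff}^{r-\epsilon}(\T^2)$. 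I expect the main difficulties to be the Anosov step — extracting a uniformly contracting complementary bundle from only the one-sided renormalization on $E$ — together with the verification of matching periodic data along $W^s$, which has to come from a global ergodic argument rather than from any purely leafwise construction.
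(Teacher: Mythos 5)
Your outline follows the paper's skeleton (renormalization identity, Denjoy, Franks--Manning, periodic data plus de la Llave and Journ\'e), but the two steps you yourself flag as the difficulties are exactly where the proposal has genuine gaps, and the arguments you sketch for them do not close. First, the Anosov step: ``Manning's lower bound, Ruelle's inequality, and a cone-field construction'' only control the exponents of particular measures (the measure of maximal entropy, or the $\phi_t$-invariant measure), whereas a cone-field/domination criterion needs subexponential transverse growth along \emph{all} orbits. The paper gets this by first proving that the Franks semi-conjugacy is a genuine topological conjugacy (a nontrivial argument using quasi-isometry of the lifted orbit foliation and an analysis of the induced maps on the quotient spaces $\R^2/\tilde\cF^u$ and $\R^2/\tilde L^u$), then using the conjugacy to see that every periodic point has non-positive transverse exponent, invoking density of periodic measures and Kalinin's theorem to get uniform subexponential bounds, and only then the cone criterion; your ordering (Anosov first, conjugacy second) has no substitute for this input.

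Second, and more seriously, your periodic-data step is circular. You claim $\log|\det Df|$ is a coboundary because ``its integral against every $f$-invariant measure vanishes by the entropy and exponent matching above,'' but nothing above establishes that the stable exponent equals $-\log\lambda$ for every invariant (or even every periodic) measure --- that statement \emph{is} the rigidity to be proved; the entropy arguments pin down only the MME of $f$ and the SRB measure of $f^{-1}$. The paper's essential extra ingredient, absent from your proposal, is Herman--Katznelson--Ornstein--Khanin--Teplinsky rigidity: the return map of $\phi_t$ to a cross-section is a $C^2$ circle diffeomorphism whose rotation number is the quadratic irrational attached to $\lambda$, hence bi-Lipschitz conjugate to the rigid rotation; this makes the flow holonomies between $\cF^{cs}$-leaves uniformly bi-Lipschitz, and a bounded-distortion argument along the $C^{1+\mathrm{H\ddot{o}lder}}$ stable leaves then forces \emph{all} hyperbolic periodic points to share the same stable exponent, which the MME/Ruelle and SRB/Pesin bounds subsequently fix at $-\log\lambda$ (with specification ruling out non-hyperbolic periodic points). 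Without this mechanism your coboundary claim is unsupported, and note this is also where the hypothesis $r\ge 2$ is used a second time, beyond Denjoy. A minor additional point: you differentiate the conjugated flow $h\phi_t h^{-1}$ in $t$ to define $\sigma$ while $h$ is still only a homeomorphism; the paper runs the constant-speed argument after the conjugacy is known to be $C^{r-\epsilon}$, and you should either do the same or rephrase it at the level of the time cocycle.
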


Recently there is an increasing interest in the study of rigidity properties for actions of abelian-by-cyclic groups, see \cite{As,BW,BMNR,HX,WX,Liu}. In the literature there are much more attention in higher rank abelian group actions as well as the well-known Zimmer program (classifying actions of higher rank Lie groups/lattices). This may be the case as explained in the following. The abelian actions have lots of symmetry either along  Lyapunov foliations in the ambient space or  from the  structure of  acting group, and more importantly there are many deep applications in diophantine approximation etc.  As for the latter one, Zimmer program is aiming to classify higher rank Lie groups/lattices acting on low dimensional manifolds, which brings together many fields such as group theory, dynamics and rigidity etc. In contrast, the group we consider here does not seem to have certain properties like symmetry or rigidity (or super rigidity), so it is commonly known that in general one should not expect any rigidity phenomenon for such group actions. Nevertheless, it is quite surprising, as we state in Theorem \ref{th:m-1}, that when restricted to some special manifolds (say $\T^2$), rigidity result is still possible to be obtained.

There are a few interesting work that are related to ours. We list several of them here. By considering the same acting group, in \cite{Liu} a local rigidity result on $\T^d$ is proven under some additional conditions. And in the Lie group setting \cite{Wa,As}, the authors obtained a few local rigidity result for certain special solvable group actions, under different conditions. We also note that in \cite{WX,HX,BW,BMNR}, the authors studied various discrete abelian-by-cyclic group actions, and showed certain local rigidity. Additionally, we refer to \cite{HX,As1,Fi} and the references therein for more details about these work.

 Now let us explain briefly our argument. We obtain certain hyperbolicity by combining the Denjoy's theory for circle maps and the geometry of the invariant foliations, then via Franks \cite{Fr}, we get topological conjugacy. After that, we obtain the rigidity of Lyapunov exponents, which can be approximated by those on periodic orbits. From here we can complete the proof by using  \cite{Jo, dlL1,dlL2}. Our technique shares some similarity to \cite{WX}, however neither do we use KAM iterative scheme, nor assume a priori any  hyperbolicity of the action or  diophantine condition on the rotation number (vector), all of which are heavily relied on in \cite{WX,Liu}. Let's remark that, to extend our argument in the higher dimensional manifold, a result analogous to  Herman's result for pseudo rotations seems to be necessary.

At last, we would like to emphasize that the regularity assumption, i.e. $r\ge 2$, is crucial in the proof. In particular, the assertion in Theorem \ref{th:m-1} can not be obtained if the action $\rho$ is only $C^{1+\alpha}$-smooth for some $\alpha\in[0,1)$. We have the following example of a $C^{1+\alpha}$ $G_\lambda$-action which is not topologically conjugate to a linear model. 

\begin{example}
	Let $A\in{\rm GL}(2,\Z)$ be a hyperbolic automorphism on $\T^2$. We can do the DA-construction in a small neighborhood of a fixed point of $A$, see \cite[(9.4d)]{Sm} and \cite[Chapter 8.8]{Ro}. There exists a diffeomorphism $f:\T^2\to\T^2$ satisfying:
	\begin{itemize}
		\item $f$ is partially hyperbolic $T\T^2=E^{cs}\oplus E^u$ admitting $C^{1+\alpha}$-smooth unstable foliation $\cF^u$ tangent to $E^u$ and linear foliation tangent to $E^{cs}$ which is the stable foliation of $A$; 
		\item the $\Omega$-set of $f$ consists of a source and a hyperbolic expanding attractor.
	\end{itemize}
	The fact that $f$ preserves the linear stable foliation of $A$ implies that $\|Df|_{E^u(x)}\|=\lambda$ for every $x\in\T^2$ by taking an adapted metric. Let $\phi_t$ be a flow going through the unstable foliation $\cF^u$ of $f$ with constant flow speed preserving the linear stable foliation of $A$, then the action  $\rho:\Z\ltimes_\lambda\R\to \operatorname{Diff}^{1+\alpha}(\T^2)$ defined by
	$$
	f=\rho(1,0)
	\qquad \text{and} \qquad
	\phi_t=\rho(0,t) 
	\qquad \text{satisfies} \qquad
	f\circ \phi_t=\phi_{\lambda t}\circ f.
	$$
	It is clear that this action is not topologically conjugate to any affine actions.
\end{example}

\section{Invariant foliation and linear action}

Let $f=\rho(1,0)$ and $\phi_t=\rho(0,t)$. By the group relation, \be\label{eq:g-rel}
f\circ \phi_t=\phi_{\lambda t}\circ f.
\ee
Let $\cX$ be the vector field generating $\phi_t$, namely $$\cX(x)=\frac{d}{dt}|_{t=0}\phi_t(x).$$Notice that by our assumption, $\cX$ is a smooth vector field and $\cX(x)\neq0$ for every $x\in\T^2$.

We have the following important observation.

\begin{lemma}\label{le:uni}
There exists a constant $C\ge 1$, such that for any $n\in \N$ and $x\in \T^n$, 
$$
C^{-1}\lambda^n\le \|Df^n|_{\cX(x)}\|\le C\lambda^n.
$$
In particular, if we denote $\cF^u$ the foliation generated by $\phi_t$, then for  any ergodic measure $\mu$ of $f$, the Lyapunov exponent of $f$ on $\cF^u$ is $\log\lambda$.
\end{lemma}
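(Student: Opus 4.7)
The plan is to exploit the group relation \eqref{eq:g-rel} at the infinitesimal level. Differentiating $f\circ\phi_t=\phi_{\lambda t}\circ f$ with respect to $t$ at $t=0$, the chain rule yields the identity
\[
Df(x)\cdot \cX(x) \;=\; \lambda\,\cX(f(x))
\]
for every $x\in\T^2$. In other words, the vector field $\cX$ is a (nowhere-zero) eigen-direction of $Df$ with eigenvalue $\lambda$, in a pointwise sense along orbits.

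Iterating this relation $n$ times gives $Df^n(x)\cdot \cX(x) = \lambda^n\,\cX(f^n(x))$, and therefore
\[
\|Df^n|_{\cX(x)}\| \;=\; \lambda^n\cdot\frac{\|\cX(f^n(x))\|}{\|\cX(x)\|}.
\]
Now I would use that $\cX$ is continuous and nowhere-vanishing on the compact manifold $\T^2$, so there exist $0<m\le M<\infty$ with $m\le\|\cX(x)\|\le M$ everywhere. Setting $C:=M/m$ immediately gives the desired two-sided bound $C^{-1}\lambda^n\le \|Df^n|_{\cX(x)}\|\le C\lambda^n$, uniformly in $x$ and $n$.

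For the Lyapunov exponent statement, I would simply take the logarithm of the exact formula above and divide by $n$: the term $n\log\lambda$ dominates, and the correction $\frac{1}{n}\bigl(\log\|\cX(f^n(x))\|-\log\|\cX(x)\|\bigr)$ is bounded by $\frac{1}{n}(\log M-\log m)$, hence tends to $0$ as $n\to\infty$ for \emph{every} $x$. Thus the Lyapunov exponent of $f$ along $\cF^u$ is identically $\log\lambda$ on all of $\T^2$, which in particular gives the value $\log\lambda$ for any ergodic invariant measure.

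There is no real obstacle here: the lemma is essentially a consequence of the commutation relation combined with compactness of $\T^2$. The only thing to be mindful of is that the equality in the formula for $\|Df^n|_{\cX(x)}\|$ uses that $\cX(x)$ is a nonzero vector (so dividing by $\|\cX(x)\|$ is legitimate), which is guaranteed because the $\R$-action is locally free and $\cX$ generates $\phi_t$.
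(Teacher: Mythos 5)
Your proposal is correct and follows essentially the same route as the paper: differentiate the relation $f\circ\phi_t=\phi_{\lambda t}\circ f$ at $t=0$ to get $Df(x)\cX(x)=\lambda\cX(f(x))$, iterate to obtain $\|Df^n|_{\cX(x)}\|=\lambda^n\|\cX(f^n(x))\|/\|\cX(x)\|$, and take $C$ to be the ratio of the maximum to the minimum of $\|\cX\|$ on the compact torus. The explicit verification of the Lyapunov exponent statement is a harmless elaboration of what the paper leaves implicit.
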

\begin{proof}
From \eqref{eq:g-rel}, we have $f\circ \phi_t\circ f^{-1}(x)=\phi_{\lambda t}(x)$. By taking derivatives both sides, we have $$Df|_{\cX(f^{-1}x)}\cdot \cX(f^{-1}x)=\lambda \cX(x).$$Hence $$Df^n|_{\cX(x)}\cdot \cX(x)=\lambda^n \cX(f^n(x)),$$therefore $\|Df^n|_{\cX(x)}\|= \lambda^n\frac{\|\cX(f^n(x))\|}{\|\cX(x)\|}.$ The proof is complete by setting $C=\frac{\max_x\{\|\cX(x)\|\}}{\min_x\{\|\cX(x)\|\}}$.
\end{proof}

Since $f$ is uniformly expanding along $\cF^u$, if we consider the action of $f^{-1}$, it is uniformly contracting along $\cF^u$. Thus we have the following lemma which shows that $\cF^u$ is an irrational minimal foliation on $\T^2$.

\begin{lemma}\label{le:Fu}
	The foliation $\cF^u$ generated by $\phi_t$ is a $C^r$ irrational minimal foliation on $\T^2$. Moreover, its lifting foliation $\tilde{\cF}^u$ is quasi-isometric on $\R^2$, i. e. there exist constants $a, b>0$, such that for all $x\in\R^2$ and $y\in \tilde\cF^u(x)$, 
		\begin{equation}\label{eq:QI}
			d_{\tilde\cF^u}(x,y)\le a\cdot d_{\R^2}(x,y)+b.
		\end{equation}
		Here $d_{\R^2},d_{\tilde\cF^u}$ are distance functions on $\R^2$ and leaves of $\tilde{\cF^u}$ respectively.
\end{lemma}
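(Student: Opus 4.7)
My plan is in three steps. First I prove that $\cF^u$ is minimal by combining Denjoy's theory for $C^2$ flows on $\T^2$ with the uniform expansion of $f$ along $\cX$ from Lemma~\ref{le:uni}. Minimality then forces the rotation number on any closed transversal to be irrational. Finally I derive the quasi-isometry bound by tracking successive returns of leaves to a closed transversal, exploiting the $C^0$-conjugacy of $\cF^u$ with a linear irrational foliation to control the Euclidean growth.

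\emph{Minimality.} Since $\cX$ is $C^r$ with $r\ge 2$, Denjoy's theorem rules out exceptional minimal sets for $\phi_t$, leaving only two possibilities: either every orbit is closed, or the flow is minimal on $\T^2$. If every orbit were closed, the leaf space would be a topological $S^1$ with a continuous, hence bounded, length function $\ell$. But Lemma~\ref{le:uni} gives
$$
\ell(f^n L)=\int_L \|Df^n|_{\cX}\|\,d\ell\ \ge\ C^{-1}\lambda^n\,\ell(L)\to\infty,
$$
while $f^n L$ is itself a closed leaf---a contradiction. Therefore $\cF^u$ is minimal, and the Poincar\'e return map on any $C^r$ closed transversal is necessarily irrational (otherwise a periodic orbit would give a closed leaf).

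\emph{Quasi-isometry.} Choose a $C^r$ closed transversal $T$ representing a primitive class $\vec t\in\Z^2$, and complete $\vec t$ to a basis $\{\vec t,\vec s\}$ of $\Z^2$. Since $T$ is compact and transversal to $\cX$, the return time is bounded above by some $\tau_{\max}$, and with $V:=\|\cX\|_\infty<\infty$, each leaf segment between two successive returns has length at most $V\tau_{\max}$. By Denjoy, the $C^r$ return map on $T$ is $C^0$-conjugate to a rotation, and by suspension this extends to a $C^0$-conjugacy $h:\T^2\to\T^2$, isotopic to the identity, between $\cF^u$ and the linear foliation $\cF_\alpha$ of irrational slope $\alpha$; we may arrange $h$ to send $T$ to a standard transversal. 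The lift $\tilde h:\R^2\to\R^2$ commutes with $\Z^2$-translations, so $\|\tilde h-\mathrm{id}\|_\infty\le K$ for some $K>0$. The lifts $\tilde T_k:=\tilde T+k\vec s$ are pairwise disjoint; the successive crossings $\tilde p_0,\tilde p_1,\ldots$ of a leaf $\tilde L$ with $\bigcup_k\tilde T_k$ are mapped by $\tilde h$ to points on a line of slope $\alpha$ whose Euclidean separation grows linearly, giving $\|\tilde p_n-\tilde p_0\|\ge cn-2K$ for some fixed $c>0$. Combined with the leaf-length bound $\le nV\tau_{\max}$ from $\tilde p_0$ to $\tilde p_n$, and extending to arbitrary $x,y\in\tilde L$ by adding at most $V\tau_{\max}$ on each end, this yields \eqref{eq:QI}.

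\emph{Main obstacle.} The minimality step is a clean application of Lemma~\ref{le:uni} and Denjoy. The delicate point lies in the quasi-isometry: one must upgrade Denjoy's conjugacy of the Poincar\'e return map to a foliation-preserving conjugacy of the full foliation whose lift has bounded displacement on $\R^2$. This is what ensures the linear growth of $\|\tilde p_n-\tilde p_0\|$ in $n$ survives the topological deformation, and it is where the $C^2$ hypothesis enters essentially.
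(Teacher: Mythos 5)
Your minimality step rests on a false dichotomy, and that is a genuine gap. For a nonsingular $C^2$ flow on $\T^2$, ruling out exceptional minimal sets does \emph{not} leave only the alternatives ``every orbit is closed'' or ``the flow is minimal'': there remains the case of finitely or infinitely many closed orbits with non-closed orbits spiralling towards them (e.g.\ the suspension of a circle diffeomorphism with rational rotation number that is not periodic, or a flow with Reeb annuli). Your length-growth argument only excludes the case where \emph{all} orbits are closed, since it needs the leaf space to be a circle with a continuous (hence bounded) length function; the dangerous case --- existence of at least one closed orbit --- is untouched, and expansion alone does not kill it ($\ell(f^nL)\to\infty$ is not absurd for a single closed leaf). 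The repair is short and is exactly the paper's argument: if $\gamma$ is any closed orbit, then $f^{-n}(\gamma)$ is again a closed orbit (as $f$ permutes $\phi_t$-orbits), and Lemma~\ref{le:uni} applied to $f^{-1}$ forces its length to tend to $0$; but a closed orbit of a nonsingular flow on $\T^2$ is essential, so its length is bounded below by the systole (equivalently, a very short closed orbit would lie in a single flow box of $\phi_t$, which is impossible). With closed orbits excluded, the foliation is a suspension over an irrational circle diffeomorphism, and Denjoy ($r\ge2$) gives minimality, as you intend.

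The rest of your argument is sound and in fact supplies a proof of the quasi-isometry statement that the paper merely quotes from the suspension structure: bounded return time to a closed cross-section gives the leafwise length bound between consecutive crossings, and a (semi-)conjugacy to the linear irrational foliation, whose lift is at bounded distance from the identity, gives the linear lower bound on the Euclidean displacement $\|\tilde p_n-\tilde p_0\|$. One correction to your closing remark: for this step you only need the Poincar\'e \emph{semi}-conjugacy to the rotation (bounded displacement of the lift), which exists for any suspension with irrational rotation number, so the quasi-isometry does not use $C^2$ at all; the $r\ge2$ hypothesis enters through Denjoy in the minimality statement (and later in the smooth rigidity), not here.
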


\begin{proof}
	According to \eqref{eq:g-rel}, $f$ is uniformly expanding along $\cF^u$, thus $f^{-1}$ uniformly contracts leaves of $\cF^u$. This implies $\cF^u$ has no  closed leaves. Otherwise, assume $\gamma^u\in\cF^u$ is a closed leaf, then the length of $f^{-n}(\gamma^u)$ tends to zero as $n\to+\infty$. By taking the subsequence $f^{-k_n}(\gamma^u)\to z\in\T^2$, which contradicts to the local tubular neighborhood of $\phi_t$ around $z\in\T^2$.
	
	Since $\cF^u$ has no closed leaves, it is a suspension of a diffeomorphism of $\mathbb{S}^1$ with irrational rotation number (\cite[Theorem 4.3.3]{HH}). Since the action $\rho$ is $C^r$-smooth for $r\geq2$, the flow $\phi_t$ is $C^r$ and $\cF^u$ is minimal due to Denjoy's theorem. Moreover, the lifting foliation $\tilde\cF^u$ on $\R^2$ of a suspension foliation is quasi-isomeric. This proves the lemma.
\end{proof}



Let $f_*:H_1(\T^2)\to H_1(\T^2)$ be the induced map of $f$ on the first homology group of $\T^2$, and $F:\R^2\to \R^2$ be a lift of $f$ to the universal cover. Then $f_*:=A\in{\rm GL}(2,\Z)$ and $F(x)=Ax+G(x)$ for every $x\in\R^2$, where $G:\R^2\to \R^2$ is a $\Z^2$-periodic continuous function:
$$
G(x+m)=G(x),\qquad\forall x\in\R^2,~\forall m\in\Z^2.
$$ 
In particular, there exists $C_0>0$, such that 
$\|G(x)\|\leq C_0$ for every $x\in\R^2$.

\begin{lemma}\label{le:hyp}
The induced map $f_*=A\in{\rm GL}(2,\Z)$ is hyperbolic. 
\end{lemma}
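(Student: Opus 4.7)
The plan is to show $\lambda$ is an eigenvalue of $A$; hyperbolicity will then follow automatically, since $A\in{\rm GL}(2,\Z)$ has $\det A=\pm 1$, so the other real eigenvalue must equal $\pm\lambda^{-1}$, giving both eigenvalues modulus different from $1$. The eigenvector I would produce is the common asymptotic direction of the leaves of $\tilde\cF^u$.

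To make sense of this direction, I would invoke Denjoy's theorem: by Lemma~\ref{le:Fu}, $\cF^u$ is a $C^r$ ($r\ge 2$) minimal suspension foliation of an irrational circle diffeomorphism, which by Denjoy is topologically conjugate to a rotation $R_\alpha$. This yields a homeomorphism $h:\T^2\to\T^2$, isotopic to the identity, carrying $\cF^u$ to the linear foliation of slope $\alpha$; its lift $\tilde h:\R^2\to\R^2$ differs from the identity by a bounded, $\Z^2$-periodic displacement, so every leaf of $\tilde\cF^u$ stays within uniformly bounded Euclidean distance of a translate of the line $\R v$ with $v=(1,\alpha)\in\R^2\setminus\{0\}$. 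Moreover $\phi_t$ is a time change of the unit-speed suspension of $R_\alpha$, hence itself uniquely ergodic, so Birkhoff's theorem applied to the $\R^2$-valued displacement cocycle $\tilde\phi_T(x)-x=\int_0^T\tilde\cX(\tilde\phi_s(x))\,ds$ gives
\[
\frac{\tilde\phi_T(x)}{T}\longrightarrow c\,v\qquad (T\to+\infty),
\]
uniformly in $x\in\R^2$, with $c>0$ thanks to the quasi-isometric lower bound in Lemma~\ref{le:Fu}.

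With this in hand, I would insert the convergence into the commutation relation $F\circ\tilde\phi_T=\tilde\phi_{\lambda T}\circ F$. Writing $F(y)=Ay+G(y)$ with $G$ bounded and dividing by $T$,
\[
A\cdot\frac{\tilde\phi_T(x)}{T}+\frac{G(\tilde\phi_T(x))}{T}=\frac{F(\tilde\phi_T(x))}{T}=\lambda\cdot\frac{\tilde\phi_{\lambda T}(Fx)}{\lambda T}.
\]
As $T\to+\infty$ the left-hand side converges to $c\,Av$ while the right-hand side converges to $\lambda\,c\,v$; therefore $Av=\lambda v$. Since $A\in{\rm GL}(2,\Z)$ has integer entries and $\det A=\pm 1$, the other real eigenvalue is forced to be $\pm\lambda^{-1}$, and $A$ is hyperbolic.

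The main obstacle is the first step: establishing the uniform convergence of $\tilde\phi_T(x)/T$ to a single direction $v$. It is here that the $C^2$ hypothesis enters, twice in fact—through Denjoy, to obtain a topological conjugacy between $\cF^u$ and a linear foliation, and through the unique ergodicity of the resulting suspension flow. Once this asymptotic is in place, the commutation relation reduces the identification of the eigenvalue to the elementary limit computation above.
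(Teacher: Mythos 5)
Your argument is correct, but it takes a genuinely different route from the paper. The paper proves the lemma by contradiction through a growth-rate comparison: if $A=f_*$ were not hyperbolic, then $\|A^k\|$ grows at most polynomially, so $F^k$ of any bounded set grows at most polynomially; on the other hand, Lemma~\ref{le:uni} gives exponential growth of leafwise distances along $\tilde\cF^u$, and the quasi-isometry of Lemma~\ref{le:Fu} converts this into exponential growth in $\R^2$ --- a contradiction. You instead identify the asymptotic translation vector $w=\lim_{T\to\infty}\tilde\phi_T(x)/T$ (which exists by unique ergodicity of $\phi_t$, and is nonzero because the leafwise distance from $x$ to $\tilde\phi_T(x)$ is the orbit arc length $\geq T\min\|\cX\|$, hence by quasi-isometry the Euclidean displacement grows linearly) and feed it into the lifted relation $F\circ\tilde\phi_T=\tilde\phi_{\lambda T}\circ F$ to get $Aw=\lambda w$, after which $|\det A|=1$ forces the second eigenvalue to be $\pm\lambda^{-1}$. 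This buys strictly more than the paper's lemma: you obtain at this early stage that $\lambda$ is the unstable eigenvalue of $A$ and that the expanding eigendirection is the asymptotic cycle of $\phi_t$, facts the paper only recovers later via the Franks semi-conjugacy and the entropy/Lyapunov-exponent analysis; the price is heavier input (unique ergodicity of the suspension flow), whereas the paper's contradiction argument uses nothing beyond Lemmas~\ref{le:uni} and~\ref{le:Fu}. Two small touch-ups: the convergence $\tilde\phi_T(x)/T\to cv$ is not uniform over $x\in\R^2$ because of the $x/T$ term --- it is the displacement cocycle $(\tilde\phi_T(x)-x)/T$ that converges uniformly, and in any case you only need the limit at the two fixed points $x$ and $Fx$; and the detour through Denjoy is not actually necessary (nor is the identification of the direction $v$): any circle homeomorphism with irrational rotation number is uniquely ergodic and has orbits deviating boundedly from the rotation, so the limit $w=\int\cX\,d\mu$ can be used directly, meaning your proof does not really consume the $C^2$ hypothesis in the way you suggest.
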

\begin{proof}
Assume $A$ is not hyperbolic. For any bounded set $\gamma\subset\R^2$, denote 
$\|\gamma\|=\sup_{x\in\gamma}\{\|x\|\}$, 
then we inductively have
\begin{align*}
	\|F(\gamma)\|&=A(\gamma)+G(\gamma)
	\leq\|A\|\cdot\|\gamma\|+C_0\\
	\|F^2(\gamma)\|&=\|F(A(\gamma)+G(\gamma))\|
	\leq\|A^2(\gamma)+A\circ G(\gamma)\|+C_0 \\
	&\leq\|A^2\|\cdot\|\gamma\|+C_0\|A\|+C_0\\
	&\cdots \cdots \\
	\|F^k(\gamma)\|&\leq\|A^k\|\cdot\|\gamma\|+
	C_0\cdot\left(\sum_{i=0}^{k-1}\|A^i\|\right).
\end{align*}
Since $A$ is not hyperbolic, $\|F^k(\gamma)\|$ has at most polynomial growth rate in $\R^2$ with respect to $k$. 

However, if we take a segment $\gamma^u\subset\tilde{\cF}^u(x)$ for any $x\in\R^2$ with two endpoints $x,y\in\gamma^u$, then
$$
d_{\tilde{\cF}^u}(F^n(x),F^n(y))\geq 
C^{-1}\lambda^n\cdot d_{\tilde{\cF}^u}(x,y)
$$
Lemma \ref{le:Fu} shows that $\tilde{\cF}^u$ is quasi-isometric, thus we have
$$
d_{\R^2}(F^n(x),F^n(y))~\geq~
\frac{1}{a}\cdot\left(d_{\tilde{\cF}^u}(F^n(x),F^n(y))-b\right)
~\geq~
\frac{1}{a}\cdot\left(C^{-1}\lambda^n\cdot d_{\tilde{\cF}^u}(x,y)-b\right).
$$
This implies
$$
\|F^n(\gamma^u)\|~\geq~
\frac{1}{2}\cdot d_{\R^2}(F^n(x),F^n(y))
~\geq~
\frac{1}{2a}\cdot\left(C^{-1}\lambda^n\cdot d_{\tilde{\cF}^u}(x,y)-b\right),
$$
which has exponential growth rate. This is a contradiction, so $f_*=A\in{\rm GL}(2,\Z)$ is hyperbolic.
\end{proof}

\section{Topological conjugacy on $\T^2$}

Since the induced map $f_*=A\in{\rm GL}(2,\Z)$ is hyperbolic, J. Franks \cite{Fr} proved the following semi-conjugacy for the action of $f$.

\begin{theorem}[\cite{Fr}]\label{th:franks}
	There exists a continuous surjective map $H:\R^2\to \R^2$ such that
	\begin{itemize}
		\item $H(x+m)=H(x)+m$ for any $x\in\R^2$ and $m\in\Z^2$;
		\item there exists a constant $K>0$, such that $\|H-{\rm Id}\|_{C^0}<K$;
		\item $H\circ F(x)=A\circ H(x)$ for any $x\in\R^2$.
	\end{itemize}
	Moreover, let $h:\T^2\to\T^2$ be the projection of $H$ on $\T^2$, then $h$ is continuous, surjective and satisfies $h\circ f=A\circ h$ on $\T^2$.
\end{theorem}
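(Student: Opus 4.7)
The plan is to construct $H$ as a bounded perturbation of the identity and reduce the intertwining identity to a linear cohomological equation, which can be inverted using hyperbolicity of $A$. Write $H(x)=x+u(x)$ with $u\colon\R^2\to\R^2$ to be chosen bounded, continuous, and $\Z^2$-periodic. Under this ansatz the requirement $H(x+m)=H(x)+m$ is automatic, the bound $\|H-{\rm Id}\|_{C^0}<K$ reduces to boundedness of $u$, and the conjugacy identity $H\circ F=A\circ H$ becomes
$$
u(F(x))-Au(x)=-G(x).
$$

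To solve this, I would use the $A$-invariant splitting $\R^2=E^s\oplus E^u$ guaranteed by Lemma \ref{le:hyp}, with $\|A|_{E^s}\|<1$ and $\|A^{-1}|_{E^u}\|<1$. Writing $u=u^s+u^u$ and $G=G^s+G^u$ with respect to this splitting, the equation decouples, and each piece is solved by iterating in its contracting direction. For the unstable component, rewrite as $u^u(x)=A^{-1}u^u(F(x))+A^{-1}G^u(x)$ and iterate forward to get
$$
u^u(x)=\sum_{k=0}^{\infty}A^{-(k+1)}G^u\bigl(F^k(x)\bigr).
$$
For the stable component, rewrite as $u^s(x)=Au^s(F^{-1}(x))-G^s(F^{-1}(x))$ and iterate backward to get
$$
u^s(x)=-\sum_{k=0}^{\infty}A^k G^s\bigl(F^{-(k+1)}(x)\bigr).
$$
Uniform convergence on $\R^2$ is immediate since $G$ is bounded while $\|A|_{E^s}^k\|$ and $\|A^{-1}|_{E^u}^k\|$ decay geometrically, so $u$ is continuous and satisfies the cohomological equation by construction.

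The $\Z^2$-periodicity of $u$ follows from the identity $F(y+n)=F(y)+An$ for $n\in\Z^2$ (a consequence of periodicity of $G$), which iterates to $F^{\pm k}(x+m)=F^{\pm k}(x)+A^{\pm k}m$. Since $A^{\pm 1}\in{\rm GL}(2,\Z)$, the shifts $A^{\pm k}m$ lie in $\Z^2$, so the $\Z^2$-periodicity of $G^s$ and $G^u$ passes through the two series. This yields a continuous projection $h\colon\T^2\to\T^2$ with $h\circ f=A\circ h$. Finally, bounded $C^0$-distance from ${\rm Id}$ makes $h$ homotopic to the identity on $\T^2$ via the straight-line homotopy of its lift, hence $h_*={\rm Id}$ on $H_1(\T^2)$ and $h$ has degree one; since any continuous self-map of $\T^2$ of nonzero degree is surjective, $h$ is onto, which lifts back to surjectivity of $H$.

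The real content is collecting the correct geometric rates for uniform convergence of the two series and tracking $\Z^2$-periodicity through iterates of $F$; beyond this bookkeeping I do not expect any genuine obstacle, which matches the classical status of Franks' argument — the only place where the hyperbolicity hypothesis $f_*=A$ hyperbolic (established in Lemma \ref{le:hyp}) is strictly needed is in guaranteeing the dichotomy of contraction rates on $E^s$ and $E^u$ that powers the two series.
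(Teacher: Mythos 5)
The paper does not prove this statement at all: it is quoted directly from Franks \cite{Fr}. Your argument is correct and is essentially the classical proof of that cited result — the ansatz $H=\mathrm{Id}+u$ reduces the conjugacy to the twisted cohomological equation $u\circ F-Au=-G$, which decouples along the $A$-invariant splitting (valid here because the projections onto $E^s,E^u$ commute with $A$ and $G^s,G^u$ inherit $\Z^2$-periodicity), and your two geometric series do solve the respective components, with periodicity following from $F^{\pm k}(x+m)=F^{\pm k}(x)+A^{\pm k}m$ and $A^{\pm k}m\in\Z^2$; the degree-one homotopy argument for surjectivity of $h$, lifted back to $H$ via the equivariance $H(x+m)=H(x)+m$, is also sound. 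So your proposal supplies a complete proof of what the paper merely cites, relying only on Lemma \ref{le:hyp} (hyperbolicity of $f_*=A$) exactly as the paper's logical structure requires.
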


Applying in our context, we show the following

\begin{proposition}\label{pr:top}
With notations $H, h$ as in Theorem \ref{th:franks}, the semi-conjugacy $h:\T^2\to\T^2$ is a homeomorphism, thus a topological conjugacy. Moreover, the conjugacy $h$ maps the foliation $\cF^u$ generated by $\phi_t$ to the linear expanding foliation $L^u$ of $A$ on $\T^2$: $h(\cF^u)=L^u$. 
\end{proposition}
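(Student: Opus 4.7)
The plan is to prove Proposition~\ref{pr:top} in three stages, working on the universal cover. Stage~1 shows that $H$ sends each leaf of $\tilde\cF^u$ into a single leaf of the linear unstable foliation $\tilde L^u$ of $A$. Stage~2 shows $H$ is injective on each $\tilde\cF^u$-leaf, so that it maps such a leaf homeomorphically onto an $\tilde L^u$-leaf. Stage~3, the main obstacle, promotes leafwise injectivity to global injectivity via a Denjoy-type argument on a transversal, and this is precisely where the hypothesis $r\ge 2$ is used.

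For Stage~1, let $\tilde y\in\tilde\cF^u(\tilde x)$. Applying Lemma~\ref{le:uni} to $f^{-1}$ gives the intrinsic leaf-contraction $d_{\tilde\cF^u}(F^{-n}\tilde x,F^{-n}\tilde y)\le C\lambda^{-n}d_{\tilde\cF^u}(\tilde x,\tilde y)$, so the Euclidean distance between the two backward orbits stays uniformly bounded. Combined with $\|H-\mathrm{Id}\|_{C^0}<K$ and the intertwining $H\circ F^{-n}=A^{-n}\circ H$ from Theorem~\ref{th:franks}, the vector $A^{-n}\bigl(H(\tilde x)-H(\tilde y)\bigr)$ remains bounded as $n\to\infty$. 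Since $A$ is hyperbolic (Lemma~\ref{le:hyp}) with unstable eigenvalue $\lambda>1$ and the other eigenvalue of modulus strictly less than $1$, expanding $H(\tilde x)-H(\tilde y)$ in the eigen-basis of $A$ and applying $A^{-n}$ forces the stable component to vanish, so $H(\tilde y)\in\tilde L^u(H(\tilde x))$.

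Stage~2 is a clean clash between leafwise expansion and $C^0$-closeness to the identity. If $H(\tilde x)=H(\tilde y)$ with $\tilde y\in\tilde\cF^u(\tilde x)\setminus\{\tilde x\}$, then $H(F^n\tilde x)=A^nH(\tilde x)=A^nH(\tilde y)=H(F^n\tilde y)$ for every $n\in\Z$, so $\|F^n\tilde x-F^n\tilde y\|_{\R^2}\le 2K$. Quasi-isometry (Lemma~\ref{le:Fu}) then bounds the intrinsic distance $d_{\tilde\cF^u}(F^n\tilde x,F^n\tilde y)\le 2aK+b$, contradicting the lower bound $C^{-1}\lambda^n d_{\tilde\cF^u}(\tilde x,\tilde y)$ from Lemma~\ref{le:uni}. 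Since $H|_{\tilde\cF^u(\tilde x)}$ is a continuous injection, and $\|H-\mathrm{Id}\|<K$ together with the quasi-isometry makes it proper between two properly embedded copies of $\R$, it is a homeomorphism onto $\tilde L^u(H(\tilde x))$.

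For Stage~3, pick a closed $C^r$ transversal $T\simeq\mathbb{S}^1$ to $\cF^u$ on $\T^2$; since $\cF^u$ has no closed leaves and the flow $\phi_t$ is $C^r$, the Poincar\'e first-return map $R_f:T\to T$ is an orientation-preserving $C^r$ circle homeomorphism with irrational rotation number, and because $r\ge 2$, Denjoy's theorem makes $R_f$ topologically conjugate to a rigid rotation. The linear foliation $L^u$ has an analogous return map $R_A$ on a transverse circle which is literally an irrational rotation; since $H=\mathrm{Id}+$bounded is homotopic to the identity on $\T^2$, the two rotation numbers coincide. The map $h$ descends, via the leaf-to-leaf correspondence from Stages~1--2, to a continuous surjective circle map semiconjugating $R_f$ to $R_A$, and any semiconjugacy between two irrational rotations of equal angle has no wandering intervals to collapse, hence must be a homeomorphism. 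Combining this transverse injectivity with Stage~2's leafwise injectivity yields global injectivity of $h$, so $h$ is a homeomorphism by compactness of $\T^2$; Stage~1 together with surjectivity of $h$ then gives $h(\cF^u)=L^u$. It is precisely the failure of Denjoy's theorem at regularity $C^{1+\alpha}$ that permits the counterexample described in the Introduction.
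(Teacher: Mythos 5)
Your Stages 1 and 2 are exactly the paper's first claim (Claim 3.1): the backward-boundedness argument putting $H(\tilde\cF^u(x))$ inside $\tilde L^u(H(x))$, and the forward expansion-versus-$2K$ argument for leafwise injectivity, are the same as in the paper, and the properness/surjectivity remark is an acceptable substitute for the paper's simple-connectedness argument.

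The problem is Stage 3, where there is a genuine gap: the map $\sigma$ you want to "descend" to is not well-defined as stated. The leaf-to-leaf correspondence from Stages 1--2 lives on the torus, where the leaf spaces of $\cF^u$ and $L^u$ are not circles (both foliations are minimal, so every leaf is dense and the quotient topology is degenerate); given $x\in T$, the set $h(\cF^u(x))\cap T'$ is a countably infinite dense subset of $T'$, so there is no canonical image point, and you cannot instead push $h(T)$ to $T'$ by holonomy because at this stage $h$ is not known to be injective, hence $h(T)$ need not be an embedded transversal circle at all --- assuming it is presupposes essentially what Proposition \ref{pr:top} is trying to prove. The equality of rotation numbers "because $H$ is homotopic to the identity" is also asserted rather than proved (it requires fixing compatible homotopy classes of transversals and relating the rotation numbers to the common asymptotic direction of $\tilde\cF^u$ and $\tilde L^u$), though that part is repairable. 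The clean way to carry out your idea is to replace the compact transversals by the leaf spaces of the lifted foliations on $\R^2$, each homeomorphic to $\R$: $H$ then induces a genuine map $\hat H:\R^2/\tilde\cF^u\to\R^2/\tilde L^u$ intertwining $\hat F$ with $\hat A$, and this is precisely the paper's route. Note also that the paper's proof of injectivity of $\hat H$ does not go through rotation-number rigidity of circle maps: it first shows $\hat H$ is monotone (using that $\hat A^{-1}$ expands $\tilde L^s(0)$ against the $2K$-bound), and then rules out two distinct leaves having the same image by observing that the region between them projects onto all of $\T^2$ by minimality of $\cF^u$, which would force $h(\T^2)$ to be a single projected leaf. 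Denjoy's theorem (hence $r\ge2$) enters only through the minimality of $\cF^u$ established in Lemma \ref{le:Fu}, not through a conjugacy of return maps; your Denjoy-rigidity endgame could be made to work on the leaf-space level, but as written the construction it is applied to does not exist.
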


\begin{proof}
Since $H:\R^2\to\R^2$ is $\Z^2$-periodic which induced a continuous surjective map $h:\T^2\to\T^2$, we only need to $H$ is injective which will guarantee that both $H$ and $h$ are homeomorphisms

We have the following claim shows that $h(\cF^u)=L^u$.

\begin{claim}\label{clm:u-homeo}
Let $\tilde L^u$ be the expanding line foliation of $A$ on $\R^2$. For any $x\in \R^2$, the map $H$ satisfies
$$
H(\tilde\cF^u(x))=\tilde L^u(H(x)),
\qquad \text{and} \qquad
H:\tilde\cF^u(x)\to \tilde L^u(H(x))
$$ 
is a homeomorphism.
\end{claim}

\begin{proof}[Proof of the Claim]
First we have $H(\tilde \cF^u(x))\subset \tilde L^u(H(x))$. For any $y\in\tilde\cF^u(x)$, by \eqref{eq:g-rel}, 
$$
d_{\R^2}\big(F^{-n}(x),F^{-n}(y)\big)\to 0,\quad \text{as }n\to +\infty.
$$
Together with $\|H-{\rm Id}\|_{C^0}<K$, this implies that there exists $C>0$ such that $$
d_{\R^2}\big(H\circ F^{-n}(x),H\circ F^{-n}(y)\big)<C, \quad\forall n\geq0.
$$
Then by the semi-conjugacy $H\circ F(x)=A\circ H(x)$, we have 
$$
d_{\R^2}\big( A^{-n}(H(x)), A^{-n}(H(y))\big)<C.
$$
Hence $H(y)\in \tilde L^u(H(x))$.

Moreover, $H:\tilde\cF^u(x)\to \tilde L^u(H(x))$ is injective. Actually for any $y,z\in\tilde\cF^u(x)$, since $\tilde\cF^u$ is quasi-isometric, 
$$
d_{\R^2}\big(F^n(y),F^n(z)\big)\geq
\frac{1}{a}\big(d_{\tilde\cF^u}(F^n(y),F^n(z))-b\big)
\to\infty,
\quad\text{as}~n\to+\infty.
$$
If $H(y)=H(z)$, then $H\circ F^n(y)=A^n\circ H(y)=A^n\circ H(z)=H\circ F^n(y)$ and 
$$
d_{\R^2}(F^n(y), F^n(z))\leq 
d_{\R^2}(F^n(y), H\circ F^n(y))+d_{\R^2}(H\circ F^n(z), F^n(y))\leq 2K,
\qquad \forall n\geq0.
$$
This is a contradiction, thus $H(y)\neq H(z)$ and $H:\tilde\cF^u(x)\to \tilde L^u(H(x))$ is injective.

Finally, the facts that both $\tilde \cF^u(x)$ and $ \tilde L^u(H(x))$ are simply connected and $\|H-{\rm Id}\|_{C^0}<K$ show $H$ is surjective, thus $H(\tilde\cF^u(x))=\tilde L^u(H(x))$ and $H:\tilde\cF^u(x)\to \tilde L^u(H(x))$ is a homeomorphism for every $x\in\R^2$ as claimed.
\end{proof}

To complete the proof, we consider the corresponding quotient maps. Recall that, $F:\R^2\to\R^2$ preserving the foliation $\tilde\cF^u$ and $A:\R^2\to \R^2$ preserving $\tilde L^u$. Since $H$ preserves expanding foliations $H(\tilde\cF^u(x))=\tilde L^u(H(x))$ for every $x\in\R^2$, the commutative diagram $H\circ F=A\circ H$ reduced to a diagram of the corresponding quotient maps on quotient spaces $\R^2/\tilde\cF^u$ and $\R^2/\tilde L^u$. Namely, we have the following diagram
\be\label{eq:c-dia1}
\begin{tikzcd}
\R^2/{\tilde\cF^u}\arrow{r}{\hat F} \arrow[swap]{d}{\hat H} & \R^2/{\tilde\cF^u} \arrow{d}{\hat H} \\
\R^2/{\tilde L^u} \arrow{r}{\hat A} & \R^2/{\tilde L^u}
\end{tikzcd}
\ee
where $\hat F$ ($\hat A, \hat H$ respectively) is induced by $F$ ($A, H$ respectively) on the quotient space. Since both $\cF^u$ and $L^u=\pi(\tilde L^u)$ are irrational minimal foliation on $\T^2$, both quotient spaces $\R^2/\tilde\cF^u$ both $\R^2/\tilde L^u$ are necessarily isomorphic to $\R$. We denote $\R_{\tilde\cF^u}=\R^2/\tilde\cF^u$. Notice that the quotient space $\R^2/\tilde L^u$ is equal to  the stable leaf $\tilde L^s(0)$ of $A$ at $0\in\R^2$: $\tilde L^s(0)=\R^2/\tilde L^u$; and the quotient map $\hat A=A$ on $\tilde L^s(0)$. Thus the diagram \eqref{eq:c-dia1} induces 
\be\label{eq:c-dia2}
\begin{tikzcd}
\R_{\tilde\cF^u}=\R^2/{\tilde\cF^u}\arrow{r}{\hat F} \arrow[swap]{d}{\hat H} & 
\R_{\tilde\cF^u}=\R^2/{\tilde\cF^u} \arrow{d}{\hat H} \\
\tilde L^s(0)=\R^2/{\tilde L^u} \arrow{r}{\hat A=A~} & \tilde L^s(0)=\R^2/{\tilde L^u}
\end{tikzcd}
\ee
where $\hat A=A:\tilde L^s(0)\to\tilde L^s(0)$ is the linear contracting map $A(x)=\lambda^{-1}x$ for every $x\in\tilde L^s(0)\subset\R^2$.

We have the following claim:
\begin{claim}
	 We fix an orientation on $\R_{\tilde\cF^u}$ and the induced orientation on $\tilde L^s(0)$ by $H$, then the quotient map $\hat H:\R_{\tilde\cF^u}\to\tilde L^s(0)$ satisfies
	\begin{itemize}
		\item[({\bf a1})] $\hat H$ is orientation-preserving and increasing;
		\item[({\bf a2})] $\hat H$ is a bijection.
	\end{itemize}
\end{claim}

\begin{proof}[Proof of the Claim]
	Since $\|H-{\rm Id}\|_{C^0}<K$, the orientation of $\R_{\tilde\cF^u}$ induces an orientation on $\tilde L^s(0)$ by $H$ globally on $\R^2$. Since $F:\R^2\to\R^2$ is a diffeomorphism, the quotient map $\hat F:\R_{\tilde\cF^u}\to\R_{\tilde\cF^u}$ is a homeomorphism. By iterating $\hat F$ is necessary, we can assume  $\hat F:\R_{\tilde\cF^u}\to\R_{\tilde\cF^u}$ preserves the orientation and so does $A:\tilde L^s(0)\to\tilde L^s(0)$.
	
	For ({\bf a1}), assume otherwise that two points $\hat x, \hat y\in\R_{\tilde\cF^u}$ satisfy $\hat x<\hat y$ and $\hat H(\hat x)>\hat H(\hat y)$ in $\tilde L^s(0)$. Since $\hat F$ preserves the orientation, for $\tilde\cF^u(x)=\hat x$ and $\tilde\cF^u(y)=\hat y$, we have
	$$
	\hat F^{-n}(\hat x)<\hat F^{-n}(\hat y),
	\qquad \text{and} \qquad
	F^{-n}(\tilde\cF^u(x))<F^{-n}(\tilde\cF^u(y)),
	\qquad \forall n>0.
	$$
	
	For $\hat H(\hat x)>\hat H(\hat y)$ and $\hat H(\hat x)=\hat L^u(H(x))>\hat H(\hat y)=\hat L^u(H(y))$, since $A^{-1}$ is uniformly expanding along $\tilde{L}^s(0)$, we have
	$$
	\hat A^{-n}(\hat H(\hat x))-\hat A^{-n}(\hat H(\hat y))\to +\infty, \qquad \text{as}~n\to+\infty.
	$$
	This is equivalent to the Hausdorff distance between $A^{-n}(\tilde L^u(H(x)))$ and $A^{-n}(\tilde L^u(H(y)))$ tends to infinity as $n\to+\infty$ and $A^{-n}(\tilde L^u(H(x)))>A^{-n}(\tilde L^u(H(y)))$. 
	
	However, since $F^{-n}(\tilde\cF^u(x))<F^{-n}(\tilde\cF^u(y))$ for every $n>0$, the semi-conjugation
	$$
	A^{-n}(\tilde L^u(H(x)))=H\circ F^{-n}(\tilde\cF^u(x)),
	\qquad 
	A^{-n}(\tilde L^u(H(y)))=H\circ F^{-n}(\tilde\cF^u(y)),
	$$
	 and $\|H-{\rm Id}\|_{C_0}<K$ shows that $A^{-n}(\tilde L^u(H(x))$ has $2K$-bounded distance with the negative component of $\R^2\setminus A^{-n}(\tilde L^u(H(y)))$ for every $n>0$. This contradicts to the Hausdorff distance between $A^{-n}(\tilde L^u(H(x)))$ and $A^{-n}(\tilde L^u(H(y)))$ tends to infinity as $n\to+\infty$ and $A^{-n}(\tilde L^u(H(x)))>A^{-n}(\tilde L^u(H(y)))$. This proves ({\bf a1}).
	 
	 For ({\bf a2}), the surjective part of $\hat H$ comes from $H:\R^2\to\R^2$ is surjective. We only need to show $\hat H$ is injective. Otherwise, $\hat H(\hat x)=\hat H(\hat y)$, meaning that both $\tilde \cF^u(x)$ and $\tilde\cF^u(y)$ are mapped to a single line $\tilde L^u(H(x))=\tilde L^u(H(y))$. Since $\hat H$ is orientation-preserving and increasing, it follows that $H$ maps the the region $R_{x,y}\subset \R^2$ bounded by $\tilde \cF^u(x)$ and $\tilde\cF^u(y)$ in $\R^2$, to $\tilde L^u(H(x))$. However, since $\tilde\cF^u$ is an irrational minimal foliation, we have 
	 $$
	 \T^2=\pi\big(R_{x,y}\big)
	 \qquad \text{and} \qquad
	 \T^2=h(\T^2)=h\circ\pi\big(R_{x,y}\big))=
	 \pi\big(H(R_{x,y})\big)=\pi\big(\tilde L^u(H(x))\big).
	 $$
	 This is contradiction since $\pi\big(\tilde L^u(H(x))\big)$ is a single 1-dimensional leaf in $\T^2$. This proves the claim.
\end{proof}
Finally, $H$ is injective follows from $H|_{\tilde\cF^u(x)}$ is injective for every $x\in\R^2$ and $\hat H$ is injective. Thus $H:\R^2\to\R^2$ is a homeomorphism.
\end{proof}

\section{Smooth conjugacy}

By Proposition \ref{pr:top}, $f:\T^2\to\T^2$ is topologically conjugate to the hyperbolic automorphism $A=f_*\in{\rm GL}(2,\Z)$ by a homeomorphism $h:\T^2\to\T^2$ where $h\circ f=A\circ h$. 

By Lemma \ref{le:uni}, $f$ is uniformly expanding with constant Lyapunov exponent $\log\lambda$ for every ergodic measure along the $C^2$-foliation $\cF^u$, which is the orbit foliation of $\phi_t$.
Moreover, the conjugacy $h:\T^2\to\T^2$  satisfies $h(\cF^u)=L^u$.  
So we have the following lemma.

\begin{proposition}\label{pr:ph-splitting}
	The diffeomorphism $f:\T^2\to\T^2$ is partially hyperbolic
	$T\T^2=E^{cs}\oplus E^u$ with $E^u=T\cF^u$.
	Moreover, we have
	\begin{itemize}
		\item for every ergodic measure $\mu$ of $f$, the Lyapunov exponent of $\mu$ along $E^{cs}$ is non-positive;
		\item there exists an $f$-invariant foliation $\cF^{cs}$ tangent to $E^{cs}$, and the conjugacy $h$ maps $\cF^{cs}$ to the linear stable foliation $L^s$ of $A$.
	\end{itemize}
\end{proposition}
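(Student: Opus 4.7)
The plan is to pull back the linear stable foliation of $A$ via the topological conjugacy, upgrade it to a $Df$-invariant continuous distribution by a cone-field argument, and then deduce the Lyapunov bound from the nonexistence of periodic sources of $f$.

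First I would set $\cF^{cs}:=h^{-1}(L^s)$. Since $h$ is a homeomorphism conjugating $f$ to $A$ (Proposition~\ref{pr:top}) and $A$ preserves the linear stable foliation $L^s$, the foliation $\cF^{cs}$ is continuous and $f$-invariant; since $h(\cF^u)=L^u$ and $L^u\pitchfork L^s$ on $\T^2$, $\cF^{cs}$ is topologically transverse to $\cF^u$ everywhere. Moreover, because $A$ contracts each $L^s$-leaf uniformly at rate $\lambda^{-1}$ and $h$ restricts to a homeomorphism between corresponding leaves, for any $y\in\cF^{cs}(x)$ we have $d_{\T^2}(A^n\circ h(x),A^n\circ h(y))\to 0$, and by uniform continuity of $h^{-1}$ this yields the topological contraction $d_{\T^2}(f^n(x),f^n(y))\to 0$ along $\cF^{cs}$.

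Next I would construct $E^{cs}$ via an invariant cone field. Fix any continuous reference distribution $E^0$ transverse to $E^u$, and for $\alpha>0$ set
$$
\cC^{cs}_\alpha(x)=\{v^u+v^0\in T_x\T^2:v^u\in E^u(x),\,v^0\in E^0(x),\,\|v^u\|\le\alpha\|v^0\|\}.
$$
The uniform expansion $\|Df^n|_{E^u}\|\asymp\lambda^n$ from Lemma~\ref{le:uni}, combined with the contraction of $\cF^{cs}$-leaves, should force that for $\alpha$ sufficiently small and $N$ sufficiently large,
$$
Df^{-N}\bigl(\cC^{cs}_\alpha(f^N(x))\bigr)\subset\mathrm{int}\bigl(\cC^{cs}_{\alpha/2}(x)\bigr),\qquad\forall x\in\T^2.
$$
Then $\bigcap_{n\ge0}Df^{-n}(\cC^{cs}_\alpha(f^n(x)))$ defines a unique $Df$-invariant continuous line bundle $E^{cs}(x)$ transverse to $E^u(x)$, whose integral manifold through $x$ must coincide with $\cF^{cs}(x)$ by the topological contraction and uniqueness of contracting invariant manifolds. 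The main obstacle here is upgrading the merely topological contraction along $\cF^{cs}$ to a quantitative cone estimate; I expect this to be handled by lifting to the universal cover $\R^2$, using $\|H-\mathrm{Id}\|_{C^0}<K$ from Theorem~\ref{th:franks} to transport the exponential contraction of $A$ on $\tilde L^s$ back onto $f$ restricted to $\tilde\cF^{cs}$.

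Finally, I would bound the Lyapunov exponent along $E^{cs}$ by combining the absence of periodic sources with density of periodic orbit measures. For any periodic point $p$ of $f$ of period $n$, Lemma~\ref{le:uni} gives one eigenvalue of $Df^n(p)$ of modulus $\lambda^n$, so the exponent along $E^u$ at $p$ equals $\log\lambda$. If $\chi_{cs}(p)>0$, the other eigenvalue would also exceed $1$ in modulus, making $p$ a hyperbolic source; via the conjugacy, $h(p)$ would then be a topological periodic source for $A\in\mathrm{GL}(2,\Z)$. But $A$ is hyperbolic with $|\det A|=1$, hence has no periodic sources, a contradiction. Thus $\chi_{cs}(p)\le 0$ at every periodic $p$. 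Since $A$ is a transitive Anosov automorphism, Sigmund's theorem gives density of periodic orbit measures among $A$-invariant measures, and pulling back by $h^{-1}$ transfers this density to $f$. As $\mu\mapsto\chi_{cs}(\mu)=\int\log\|Df|_{E^{cs}}\|\,d\mu$ is weak-$*$ continuous, passing to the limit gives $\chi_{cs}(\mu)\le 0$ for every ergodic $\mu$, completing the proof.
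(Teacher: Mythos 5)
Your outer structure (non-positive transverse exponents at periodic points via the conjugacy, density of periodic measures, then a cone-field construction of $E^{cs}$) parallels the paper, but the step you yourself flag as the ``main obstacle'' is a genuine gap, and the mechanism you propose for it cannot close it. The bounded-distance property $\|H-{\rm Id}\|_{C^0}<K$ and the conjugacy only give $C^0$ information: on the universal cover it yields, for $y\in\tilde\cF^{cs}(x)$, an estimate of the shape $d_{\R^2}(F^n(x),F^n(y))\le \lambda^{-n}d_{\R^2}(H(x),H(y))+2K$, which does not even imply exponential contraction of short $\cF^{cs}$-segments, and no amount of topological contraction of leaves controls $Df$ in transverse directions. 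The required statement $Df^{-N}(\cC^{cs}_\alpha)\subset{\rm int}(\cC^{cs}_{\alpha/2})$ is a uniform derivative estimate, and the whole difficulty of the proposition is precisely to pass from pointwise (periodic-orbit) information about exponents to such a uniform bound; a topological conjugacy to $A$ cannot supply it (compare the $C^{1+\alpha}$ DA example in the introduction, where the same soft data are present away from the needed regularity).

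The paper closes this gap with a different, genuinely quantitative argument: since $\cF^u$ is a $C^r$ ($r\ge2$) foliation, $E^u=T\cF^u$ is a $C^1$ bundle, so $Df$ is upper triangular in a $C^1$ frame $\{e^u,e^{\perp}\}$ with diagonal cocycles $A^n(x)$ (bounded below by $C^{-1}\lambda^n$ by Lemma~\ref{le:uni}) and $C^n(x)$; non-positivity of the transverse exponents at periodic points plus density of periodic measures gives $\limsup\frac1n\log|C^n(x)|\le\epsilon$ for all invariant measures, and then Kalinin's theorem (Theorem~1.3 of~\cite{K}) upgrades this to the uniform bound $|C^n(x)|\le e^{n\epsilon}$ for all $n\ge N$ and all $x$. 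Only with this uniform subexponential bound does the cone-field criterion of~\cite{CP} produce the dominated splitting $E^{cs}\oplus E^u$, and the foliation $\cF^{cs}$ tangent to $E^{cs}$ is then obtained from dynamical coherence of partially hyperbolic diffeomorphisms of $\T^2$ (Potrie), after which $h(\cF^{cs})=L^s$ follows from uniqueness of the $A$-invariant foliation transverse to $L^u$. Your definition $\cF^{cs}:=h^{-1}(L^s)$ gives only a topological foliation and cannot by itself yield a distribution $E^{cs}$ or tangency; the periodic-exponent and weak-$*$ limit parts of your sketch are fine (and essentially the paper's), but the proposal as written is missing the cocycle/Kalinin input that makes the domination uniform.
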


\begin{proof}
	By Lemma \ref{le:uni}, for every periodic point $p$ of $f$, it has one Lyapunov exponent along $\cF^u$ is $\log\lambda$. We denote it as $\lambda^u(p)=\log\lambda$. By Proposition \ref{pr:top}, $f$ is topologically conjugate to $A$ by $h$ and $h(\cF^u)=L^u$. Since $A$ is uniformly contracting along the transversal direction of $L^u$, $f$ is topologically contracting in the transversal direction of $\cF^u$. Thus $p$ has another Lyapunov exponent $\lambda^{cs}(p)\leq 0$. 
	
	Moreover, the periodic measures of $A$ are dense in the space of ergodic measures of $A$. By the topological conjugacy, the periodic measures of $f$ are also dense in the space of ergodic measures of $f$. Thus for every ergodic measure $\mu$ of $f$, it has two Lyapunov exponents
	$$
	\lambda^{cs}(\mu)\leq 0<\lambda^u(\mu)=\log\lambda.
	$$
	
	Now we only need to show $f$ admits a dominated splitting,  which implies that $f$ is partially hyperbolic. That is the following claim.
	
	\begin{claim}
		There exists a $Df$-dominated splitting $T\T^2=E^{cs}\oplus E^u$ with $E^u=T\cF^u$, i. e. the splitting is continuous, $Df$-invariant and there exist two constants $0<\eta<1, C>1$, such that 
	    $$ 
	    \frac{\|Df^n|_{E^{cs}(x)}\|}{\|Df^n|_{E^u(x)}\|}
	    \leq C\cdot\eta^n,
	    \qquad \forall x\in\T^2,~n\geq0.
	    $$ 
	\end{claim}
	\begin{proof}[Proof of the Claim]
		The proof follows exactly the same as \cite[Proposition 5.9]{GS}.
		Since $\cF^u$ is a $C^2$-foliation, the $Df$-invariant $E^u=T\cF^u$ is a $C^1$-bundle. We have a $C^1$-smooth splitting $T\T^2=E^u\oplus E^{\perp}$ where $E^{\perp}$ is perpendicular to $E^u$. We take continuous families of unit vectors in $\{e^u(x),e^{\perp}(x)\}_{x\in\T^2}$ in $E^u,E^{\perp}$ respectively, which forms a $C^1$ base on $T\T^2$. 
		
		Since $F$ is $C^2$-smooth, there exist three families of $C^1$-functions $\{A(x)\}_{x\in\T^2}$, $\{B(x)\}_{x\in\T^2}$ and $\{C(x)\}_{x\in\T^2}$, such that in the base $\{e^u(x),e^{\perp}(x)\}_{x\in\T^2}$,
		\begin{equation*}
			Df(x)~=~\left(  
			\begin{array}{cc}
				A(x) & B(x) \\
				0    & C(x) \\
			\end{array}
			\right)
			\qquad \forall x\in\T^2.
		\end{equation*}
		Then we have
		$$
		Df(e^u(x))=A(x)e^u(fx),
		\qquad {\rm and} \qquad
		{\it proj}^{\perp}\circ Df(e^{\perp}(x))=C(x)e^{\perp}(fx),
		$$
		where ${\it proj}^{\perp}:T\T^2\to E^{\perp}$ is the projection through $E^u$.
		
		For every $x\in\T^2$ and $n\geq1$, we introduce the following notation for the cocycles
		$$
		A^n(x)=\prod_{i=0}^{n-1}A(f^i(x))
		\qquad {\rm and} \qquad 
		C^n(x)=\prod_{i=0}^{n-1}C(f^i(x)).
		$$ 
		Lemma \ref{le:uni} shows that there exists $C>1$, such that 
		$|A^n(x)|\geq C^{-1}\lambda^n$ for every $x\in\T^2$ and $n\in\N$. Moreover, for every $\epsilon>0$ and every periodic point $p$ of $f$, since the other Lyapunov exponent of $p$ is non-positive, we have
		$$
		\lim_{n\to+\infty}\frac{1}{n}\log|C^n(p)|\leq\epsilon.
		$$
		Since periodic measures are dense in all invariant measures, we have
		$$
		\lim_{n\to+\infty}\frac{1}{n}\log|C^n(x)|\leq\epsilon,
		\qquad \forall x\in\T^2
		$$
		
		We fix $0<\epsilon\ll\log\lambda$, Theorem 1.3 of~\cite{K} shows that there exists some $N=N(\epsilon)$, such that
		$$
		|C^n(x)|\leq\exp(n\epsilon),
		\qquad \forall n\geq N,~\forall x\in\T^2.
		$$
		Finally, since $B(x)$ varies $C^1$-smooth with respect to $x\in\T^2$ and uniformly bounded, there exists a continuous cone-field $\{\cC(x)\}_{x\in\T^2}$ containing $E^u$, such that 
		$$
		Df\big(\overline{\cC(x)}\big)~\subset~\cC(f(x)), \qquad \forall x\in\T^2.
		$$
		Therefore, by the cone-field criterion \cite[Theorem 2.6]{CP}, there exists a dominated splitting 
		$$
		T\T^2=E^{cs}\oplus E^u
		\qquad
	    \text{with} \qquad
	    T\cF^u=E^u.
		$$
		This proves the claim.
	\end{proof}
	Finally, Proposition 4.A.7 of \cite{Po} shows that a partially hyperbolic diffeomorphism on $\T^2$ is dynamically coherent, i. e. there exists an $f$-invariant foliation $\cF^{cs}$ tangent to $E^{cs}$. Moreover, by the topological conjugacy $h\circ f=A\circ h$, the foliation $h(\cF^{cs})$ is $A$-invariant and transverse to $L^u=h(\cF^u)$, which is unique and $L^s=h(\cF^{cs})$.
\end{proof}

\begin{remark}
	The fact $L^s=h(\cF^{cs})$ directly implies the foliation $\cF^{cs}$ is topologically contracting by $f$, i. e. for every segment $\gamma\subset\cF^{cs}(x)$ for some $x\in\T^2$, the length $|f^n(\gamma)|\to0$ as $n\to+\infty$ .
\end{remark}

The following proposition shows that if the unstable foliation of $\cF^u$ is $C^2$, then $f$ is uniformly contracting along $E^{cs}$ with constant Lyapunov exponent $-\log\lambda$. The proof is almost the same as \cite{Gu}, see also \cite{Gh, PR}, we include the proof for completeness.

\begin{proposition}\label{pr:n-exp}
For every periodic point $p$ of $f$, the Lyapunov exponent $\lambda^{cs}(p)$ of $f$ along $E^{cs}$ is equal to $-\log\lambda$. In particular, the diffeomorphism $f\in{\rm Diff}^r(\T^2)$ with $r\geq2$ is Anosov and the conjugacy $h:\T^2\to\T^2$ with $h\circ f=A\circ h$ is $C^{r-\epsilon}$-smooth.
\end{proposition}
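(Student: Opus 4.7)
My strategy is to establish periodic-data rigidity, upgrade $f$ from partially hyperbolic to Anosov, then invoke the de la Llave--Journ\'e regularity theory to promote the topological conjugacy $h$ to a $C^{r-\epsilon}$-smooth one.

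For the first step, fix a periodic point $p$ of $f$ with period $n$. Lemma \ref{le:uni} already yields $\lambda^u(p)=\log\lambda$, and Proposition \ref{pr:ph-splitting} gives $\lambda^{cs}(p)\le 0$, so the task is to upgrade this inequality to the equality $\lambda^{cs}(p)=-\log\lambda$. I would exploit the $C^r$-smoothness of $\cF^u$ (with $r\ge 2$) as follows. Work in $C^1$ coordinates adapted to $\cF^u$ around the orbit of $p$, so that $f$ takes a skew-product form and $Df$ is upper triangular; its diagonal entry along $E^u$ has modulus $\lambda$ in the $\cX$-adapted metric by Lemma \ref{le:uni}, so the transverse diagonal entry $c(x)$ carries all the stable Lyapunov information and varies $C^{r-1}$ in $x$. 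The arguments of \cite{Gu, Gh, PR} then show that $c$ is cohomologous to the constant $-\log\lambda$, using distortion control along $\cF^u$-leaves afforded by $C^2$-regularity, together with the commutation $f\circ\phi_t=\phi_{\lambda t}\circ f$ to propagate the Lyapunov data along unstable leaves. Evaluating on the periodic orbit yields $\lambda^{cs}(p)=-\log\lambda$.

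For the second step, periodic measures of $A$ are weak-$*$ dense in the space of $A$-invariant measures, and by the topological conjugacy of Proposition \ref{pr:top} the same holds for $f$. Combined with upper semi-continuity of Lyapunov exponents, the equality at periodic points propagates to every ergodic $f$-invariant measure $\mu$, giving $\lambda^{cs}(\mu)=-\log\lambda$. This, together with the dominated splitting of Proposition \ref{pr:ph-splitting}, forces uniform contraction on $E^{cs}$ and upgrades $f$ to an Anosov diffeomorphism. For the third step, $f$ is now an Anosov diffeomorphism topologically conjugate to the hyperbolic automorphism $A$ with matching Lyapunov data at every periodic point on both $E^u$ and $E^{cs}$. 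The rigidity results of de la Llave \cite{dlL1, dlL2} then give that $h$ is $C^{r-\epsilon}$ along each leaf of $\cF^u$ and of $\cF^{cs}$ separately, and Journ\'e's lemma \cite{Jo} glues these two transverse regularities into global $C^{r-\epsilon}$-smoothness of $h$ on $\T^2$.

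The main obstacle is Step 1: extracting the exact value $-\log\lambda$ for the stable Lyapunov exponent at every periodic point from only the soft data (topological conjugacy plus a $C^2$ unstable foliation of constant expansion rate). The Example in the introduction shows that this rigidity genuinely fails at $C^{1+\alpha}$ regularity, so the argument necessarily makes essential use of $r\ge 2$ — the $C^2$ hypothesis is what enables the bounded distortion and cohomological reduction on which the periodic-data rigidity rests.
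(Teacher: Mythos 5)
Your Steps 2 and 3 are essentially the paper's route (density of periodic measures plus the dominated splitting to get uniform contraction, then de la Llave--Journ\'e for the regularity of $h$), but Step 1 --- the actual content of the proposition --- has a genuine gap: you assert that ``the arguments of \cite{Gu,Gh,PR} show that $c$ is cohomologous to the constant $-\log\lambda$,'' which assumes exactly what has to be proved. Nothing in your outline identifies \emph{why} the transverse exponent equals $-\log\lambda$ rather than some other (possibly point-dependent) nonpositive number: $f$ is not assumed volume-preserving, Lemma \ref{le:uni} constrains only the $E^u$ direction, and smoothness of $\cF^u$ together with distortion control can at best force the stable periodic data to be \emph{equal to each other}, not to take a specific value. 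In the paper the value is pinned down by two entropy arguments that are absent from your proposal: Ruelle's inequality applied to the measure of maximal entropy of $f^{-1}$ gives $\lambda^{cs}\leq-\log\lambda$ along a sequence of periodic orbits, and the Pesin entropy formula for the SRB measure of $f^{-1}$ gives $\lambda^{cs}\geq-\log\lambda$; only the combination with the equality-of-periodic-data claim yields $\lambda^{cs}(p)=-\log\lambda$ for all $p$.

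Moreover, the mechanism by which the periodic data are shown to be equal is not mere ``distortion control along $\cF^u$-leaves'': one must compare the contraction at two periodic points $p,q$ via the holonomy of the flow $\phi_t$ between $\cF^{cs}(q)$ and $\cF^{cs}(p)$, and for this one needs these holonomies to be \emph{uniformly} bi-Lipschitz in $t$. The paper gets this from Herman's theorem (see also \cite{KO,KT}) applied to the $C^2$ return map of $\phi_t$ on a transversal circle, whose rotation number is the quadratic irrational $\lambda$ and hence Diophantine --- this is the decisive use of $r\geq2$ that your proposal gestures at but does not supply (your triangular-coordinates/cohomology reduction does not by itself give uniformity over the non-compact family of holonomies $\{{\rm Hol}_t\}$). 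Finally, there is a further point you skip: the equality claim is proved only for \emph{hyperbolic} periodic points (one needs $\cF^{cs}_{\rm loc}(p)$ inside the stable manifold), so a separate argument --- in the paper, the specification property inherited from $A$ through the conjugacy --- is required to rule out periodic points with $\lambda^{cs}(p)=0$ before concluding that $f$ is Anosov.
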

\begin{proof}
First of all, let $\mu_{\rm max}$ be the measure with maximal entropy of $f$, which is also the measure with maximal entropy of $f^{-1}$. Since $f$ is topologically conjugate to $A$, so the measure entropy of $\mu_{\rm max}$ associated to $f^{-1}$ is equal to the topological entropy of $A$ which is $\log\lambda$. From Ruelle's inequality, the largest Lyapunov exponent of $f^{-1}$ in $\mu_{\rm max}$ satisfies
$$
\log\lambda~\leq~\lambda^+(\mu_{\rm max},f^{-1})~=~-\lambda^{cs}(\mu_{\rm max},f).
$$
From the density of periodic measures, there exists a sequence of periodic points $p_n$ whose periodic measures converge to $\mu_{\rm max}$. Then we have
\be\label{eq:small}
\lim_{n\to\infty}\lambda^{cs}(p_n)~=~
\lambda^{cs}(\mu_{\rm max},f)~\leq~-\log\lambda.
\ee
In particular,  $\lambda^{cs}(p_n)<0$ and $p_n$ is hyperbolic for $n$ large enough.

\begin{claim}
	For every pair of hyperbolic periodic points $p,q\in{\rm Per}(f)$, we have 
	$\lambda^{cs}(p)=\lambda^{cs}(q)$.
\end{claim}

\begin{proof}[Proof of the Claim]
	Since $\phi_t$ is $C^2$, when restricted to a smooth transversal cross section which is diffeomorphic to $\S^1$, the induced map $\hat \phi$ is a $C^2 $ irrational rotation, whose rotation number is a degree-2 algebraic number (i.e. the largest eigenvalue $\lambda$ of $f_*$). By Herman \cite{He}, see also \cite{KO,KT}, $\hat\phi$ is bi-Lipschitz conjugate to $R_\lambda$. As a result, there exists a constant $C_2>0$ such that for any small segment $I$ that contained in a leaf of $\cF^{cs}(x)$, let ${\rm Hol}_t:\cF^{cs}(x)\to\cF^{cs}(\phi_t(x))$ be the holonomy map induced by $\phi_t$ satisfying 
	$$
	{\rm Hol}(x)=\phi_t(x)
	\qquad \text{and} \qquad
	{\rm Hol}_t(I)\subset\cF^{cs}(\phi_t(x)),
	$$ 
	then
	\be\label{eq:bi-L}
	\frac{1}{C_2}\le\frac{|I|}{|{\rm Hol}_t(I)|}\le C_2.
	\ee
	Here, $|\cdot|$ is the length function. Here the constant $C_2$ is independent with $I$ and $t$.
	
	Now fix two distinct hyperbolic periodic points $p,q$, then the Lyapunov exponents of $p,q$ along $E^{cs}$ satisfy $\lambda^{cs}(p),\lambda^{cs}(q)<0$. In particular, we have both $\cF^{cs}(p)$ and $\cF^{cs}(q)$ are contained in the stable manifolds of $p$ and $q$ respectively. Denote $\pi>0$ the common period of $p$ and $q$: $f^\pi(p)=p$ and $f^\pi(q)=q$.
	
	Let $x$ be an intersecting point of $\cF^u(q)$ with the local stable manifold $\cF^{cs}_{\rm loc}(p)$, then there exists $t\in\R$ such that
	$x=\phi_t(q)\in\cF^{cs}(p)$. Moreover, we can define the holonomy map 
	$$ 
	{\rm Hol}_t:\cF^{cs}(q)\to\cF^{cs}(x)=\cF^{cs}(p),
	\qquad \text{with} \qquad
	{\rm Hol}_t(q)=x.
	$$
	
	Take another point $y\in\cF^{cs}_{\rm loc}(q)$ and $J\subset\cF^{cs}(q)$ has two endpoints $q,y$, then there exists a unique point $z={\rm Hol}_t(y)\in\cF^{cs}(x)=\cF^{cs}(p)$. Since we can take $y$ close to $q$ and $|J|$ being small, \eqref{eq:bi-L} implies $|{\rm Hol}_t(J)|$ is small and ${\rm Hol}_t(J)\subset\cF^{cs}(p)$ with endpoints $x={\rm Hol}_t(q)$ and $z={\rm Hol}_t(y)$.
	
	Since $f$ is $C^r$-smooth with $r\geq2$, $E^{cs}$ is H\"older continuous. This implies both $\cF^{cs}_{\rm loc}(p)$ and $\cF^{cs}_{\rm loc}(q)$ are 
	$C^{1+\text{H\"older}}$-smooth submanifolds. Since $f$ is uniformly contracting in $\cF^{cs}_{\rm loc}(p)$ and $\cF^{cs}_{\rm loc}(q)$, the distortion control argument shows there exist $K>0$, such that for every $n\geq0$,
	$$
	\frac{1}{K}\leq\frac{|f^{\pi n}(J)|}{\exp(\lambda^{cs}(q)\pi n)}\leq K,
	\qquad \text{and} \qquad
	\frac{1}{K}\leq\frac{|f^{\pi n}\circ{\rm Hol}_t(J)|}{\exp(\lambda^{cs}(p)\pi n)}
	\leq K.
	$$
	
	However, since both $\cF^{cs}$ and $\cF^u$ are $f$-invariant, the holonomy map ${\rm Hol}_t$ is commuting with $f$, thus for every $n>0$, there exists $t_n\in\R$, such that $$
	f^{\pi n}\circ{\rm Hol}_t(J)={\rm Hol}_{t_n}\circ f^{\pi n}(J)
	\qquad \text{and} \qquad 
	\frac{1}{C_2}\leq\frac{|f^{\pi n}(J)|}{|{\rm Hol}_{t_n}\circ f^{\pi n}(J)|}
	=\frac{|f^{\pi n}(J)|}{|f^{\pi n}\circ{\rm Hol}_{t}(J)|}\leq C_2.
	$$
	This implies
	$$
	\frac{1}{KC_2}\leq\frac{\exp(\lambda^{cs}(p)\pi n)}{\exp(\lambda^{cs}(q)\pi n)}
	\leq KC_2,
	\qquad \forall n>0.
	$$
	Thus we must have $\lambda^{cs}(p)=\lambda^{cs}(q)$ for every pair hyperbolic periodic points $p$ and $q$.
\end{proof}

From this claim and \eqref{eq:small}, we know that
\be\label{eq:leq}
\lambda^{cs}(p)\leq-\log\lambda<0,
\ee
for every hyperbolic periodic point $p$ of $f$.

Since $f:\T^2\to\T^2$ is topologically conjugate to $A:\T^2\to\T^2$, it also satisfies the specification property in \cite{Sig}. If there exists some periodic point $p\in{\rm Per}(f)$ satisfying $\lambda^{cs}(p)=0$, then by the specification property, there exists hyperbolic periodic points of $f$ with Lyapunov exponents arbitrarily close to zero along $E^{cs}$. This is absurd since $\lambda^{cs}(p)\leq-\log\lambda$ for every hyperbolic periodic point $p$. Thus every periodic point $p$ of $f$ is hyperbolic with
$\lambda^{cs}(p)\leq-\log\lambda$.

This implies $f$ is Anosov and $\lambda^{cs}(\mu)\leq-\log\lambda$ for every ergodic measure $\mu$ of $f$. If we consider the SRB measure $\mu^-$ of $f^{-1}$, its Lyapunov exponent along $E^{cs}$ is equal to its measure entropy $h(\mu^-,f^{-1})$, which is smaller than $\log\lambda$. So we have
$$
-\lambda^{cs}(\mu^-)=\lambda^{u}(\mu^-,f^{-1})=h(\mu^-,f^{-1})
\leq h_{\rm top}(f^{-1})=\log\lambda
$$
This implies $\lambda^{cs}(p)=\lambda^{cs}(\mu^-)\geq-\log\lambda$. Combined with \eqref{eq:leq} and Lemma \ref{le:uni}, we have
$$
\lambda^{cs}(p)=-\log\lambda,
\qquad \text{and} \qquad
\lambda^u(p)=\log\lambda,
\qquad \forall p\in{\rm Per}(f).
$$

Finally, the work of de la Llave's \cite{dlL1,dlL2} indicates that the conjugacy $h$ is $C^{r-\epsilon}$-smooth when all periodic points of $f$ have the same Lyapunov exponents to $A$. 
\end{proof}

Now we can prove Theorem \ref{th:m-1}.

\begin{proof}[Proof of Theorem \ref{th:m-1}]
 We showed that by the $C^{r-\epsilon}$-smooth conjugacy 
 $$
 A=h\circ f\circ h^{-1}=h\circ\rho(1,0)\circ h^{-1}.
 $$
 Then we can define a $C^{r-\epsilon}$-smooth flow on $\T^2$:
 $$
 \psi_t=h\circ\phi_t\circ h^{-1}=h\circ\rho(0,t)\circ h^{-1}
 \qquad
 \text{satisfying}
 \qquad
 A\circ\psi_t=\psi_{\lambda t}\circ A.
 $$ 
 Moreover, we have show that $h(\cF^u)=L^u$ which maps the orbit of $\phi_t$ to the linear unstable foliation of $A$. Thus the orbit of $\psi_t$ is $L^u$. To prove Theorem \ref{th:m-1}, we only need to show that $\psi_t$ has constant velocity.
 
 Denote 
 $$
 \cZ(x)=\frac{d}{dt}|_{t=0}\psi_t(x).
 $$
 Let $p$ be a fixed point of $A$ and $x\in L^u(p)$ with $\psi_t(p)=x$ for some $t\in\R$. Then we have
 $$
 A^n\circ\psi_t(p)=\psi_{\lambda^nt}\circ A(p),
 \qquad \text{and} \qquad
 DA^n\circ D\psi_t(\cZ(p))=D\psi_{\lambda^nt}\circ DA^n(\cZ(p)).
 $$
 This implies $DA^n(\cZ(x))=D\psi_{\lambda^nt}(\lambda^n\cdot\cZ(p))$. By taking the norm, we have
 $$
\lambda^n\cdot\|\cZ(x)\|=\lambda^n\cdot\|\cZ(\psi_{\lambda^n t}(p))\|,
 \qquad \forall n\in\Z.
 $$
 Let $n\to-\infty$, we have 
 $$
 \psi_{\lambda^n t}(p)\to p
 \qquad \text{and} \qquad
 \|\cZ(\psi_{\lambda^n t}(p))\|\to\|\cZ(p)\|.
 $$
 This implies $\|\cZ(x)\|=\|\cZ(p)\|$ for every $x\in L^u(p)$.
 
 Since $L^u(p)$ is dense in $\T^2$, we have $\|\cZ(x)\|=\|\cZ(p)\|\triangleq a$ for every $x\in\T^2$. Thus $\psi_t$ is the linear flow with constant velocity. This proves Theorem \ref{th:m-1} that $\rho$ is $C^{r-\epsilon}$-smooth conjugate to the affine action $\{A,v_{at}\}$.
\end{proof}

{\bf Acknowledgement:} C. Dong is supported by Nankai Zhide Foundation and  ``the Fundamental Research Funds for the Central Universities" No. 100-63233106. Y. Shi is supported by National Key R\&D Program of China (2021YFA1001900), NSFC (12071007,
12090015) and Institutional Research Fund of Sichuan University (2023SCUNL101).

\text{\quad}\\

\textsc{Chern Institute of Mathematics and LPMC, Nankai University, Tianjin 300071 China}

{Email: dongchg@nankai.edu.cn}

\vskip5mm

\textsc{School of Mathematics, Sichuan University, Chengdu 610065 China}

{Email: shiyi@scu.edu.cn}

\end{document}